\newcounter{algorithm}
\newcommand{\bsa}{{\boldsymbol{a}}}
\newcommand{\bsk}{{\boldsymbol{k}}}
\newcommand{\bsx}{{\boldsymbol{x}}}
\newcommand{\bsz}{{\boldsymbol{z}}}
\newcommand{\bsX}{{\boldsymbol{X}}}
\newcommand{\bszero}{{\boldsymbol{0}}} 
\newcommand{\bsone}{{\boldsymbol{1}}}  
\newcommand{\bsgamma}{{\boldsymbol{\gamma}}}
\newcommand{\rd}{{\mathrm{d}}}
\newcommand{\bbE}{{\mathbb{E}}}
\newcommand{\bbR}{{\mathbb{R}}}
\newcommand{\bbS}{{\mathbb{S}}}
\newcommand{\bbU}{{\mathbb{U}}}
\DeclareSymbolFont{bbold}{U}{bbold}{m}{n}
\DeclareSymbolFontAlphabet{\mathbbold}{bbold}
\newcommand{\ind}{{\mathbbold{1}}}
\newcommand{\calF}{{\mathcal{F}}}
\DeclareMathOperator{\var}{Var}
\begin{document}


\mainmatter              
\title{Error estimation for quasi-Monte Carlo}
\titlerunning{Error estimation for QMC}  
%
\author{Art B. Owen}
\authorrunning{A. B. Owen} 
%
\tocauthor{Art Owen}
\institute{Stanford University, \\Stanford CA 94305, USA\\
\email{owen@stanford}\\ 
\texttt{https://artowen.su.domains}
}
\maketitle              

\begin{abstract}
  Quasi-Monte Carlo sampling can attain far better
  accuracy than plain Monte Carlo sampling.
  However, with plain Monte Carlo sampling it is
  much easier to estimate the attained accuracy.
  This article describes methods old and new to
  quantify the error in quasi-Monte Carlo estimates.
  An important challenge in this setting is that
  the goal of getting accuracy conflicts with that
  of estimating the attained accuracy.  A related
  challenge is that rigorous uncertainty quantifications
  can be extremely conservative. A recent surprise
  is that some RQMC estimates have nearly symmetric
  distributions and that has the potential to allow
  confidence intervals that do not require either a central
  limit theorem or a consistent variance estimate.
  \keywords{
Bootstrap,
Digital shifts,
Median of means,
Randomized quasi-Monte Carlo,
Scrambling
    }
\end{abstract}

\section{Introduction}

Quasi-Monte Carlo (QMC) sampling is used to compute numerical
approximations to integrals. It is an alternative to plain
Monte Carlo (MC) sampling.  Under reasonable conditions,
QMC attains far better accuracy, at least asymptotically,
than MC does.
MC retains one advantage: it is easier to estimate the attained
accuracy of an MC estimate than it is for a QMC estimate.
We will see below how randomized QMC (RQMC) supports
error estimation, but even there, plain MC makes the task
easier.

Let $\mu$ be the integral of interest and
$\hat\mu_n$ be our estimate of $\mu$ using $n$
evaluations of the integrand $f$.
We want $\varepsilon_n=|\hat\mu_n-\mu|$ to be small
but we also want to know something about how
large this error is, and of course we cannot use $\mu$
to do that.  Estimating $\hat\mu-\mu$ or a distribution
for it is a form of uncertainty quantification (UQ).
In a setting with high stakes (think of safety-related
problems), we will want to have high confidence
that $\mu$ is in some interval $[a,b]$ all points of which
are acceptable. Then it is not enough for $|\hat\mu_n-\mu|$ to be small.
It should also be known to be small.
For some problems  we might only need to verify
that $a\leqslant \mu$ or that $\mu\leqslant b$.

An ideal form of UQ arises in the form of a `certificate': two
computable numbers $a\leqslant b$ where we are mathematically
certain that $a\leqslant \mu\leqslant b$.  Equivalently,
having an estimate $\hat\mu_n$ with
certainty that $|\hat\mu_n-\mu|\leqslant \Delta$ for known 
$\Delta<\infty$ provides a certificate.
These are rare for integration problems, but we will see
a few examples.  It is much more common for our
theorems to provide bounds for $|\hat\mu-\mu|$
that we cannot compute.
Existence of a finite error bound does not meaningfully quantify uncertainty.

We will look at several approaches to UQ
that all require some extra knowledge.  One approach
is to suppose that $f$ belongs to a given function
class $\calF$, in which we can bound $\varepsilon_n=\varepsilon_n(f)$.
Another is to use a model with random $\hat\mu_n$ 
where we can get bounds on the distribution of $\varepsilon_n$.
Sometimes we can only get asymptotically justified bounds
as the amount of computation grows to infinity.

We will assume that the method for getting a confidence
interval involves randomizing the points at
which $f$ is evaluated, via a pseudo-random
number generator.
This depends on using a well tested pseudo-random
number generator.  The big crush of \cite{lecu:sima:2007}
provides such a thorough test.
It is also possible to use a Bayesian model in which
$f$ itself is random.  The  most common choice is
a Gaussian process model for $f$.
For an entry point to that large literature, one can start with \cite{brio:etal:2019}.
The Bayesian approach can produce very accurate estimates
but there is no counterpart to the big crush for that approach,
and so  it does not as yet have a well tested objective
way to estimate error.

Methods that estimate both $\mu$ and $\varepsilon_n$
have a tradeoff to make. This is most clear in
RQMC (see below) which uses $R$ random replicates
of an integration rule on $n$ function evaluations.
For a given cost $nR$ we get more accuracy in
$\hat\mu$ by taking larger $n$ and better UQ
by taking larger $R$ (hence smaller $n$).

A second tradeoff can arise.  
We may have certainty or just high confidence
 that $-\Delta_n\leqslant \hat\mu_n-\mu\leqslant \Delta_n$
where we also know that $|\hat\mu_n-\mu|/\Delta_n$
converges to zero, either deterministically or probabilistically.
Then our resulting estimate $\Delta_n$ of the size
of the error will for large enough $n$ 
be a significant exaggeration, by an unknown factor.
The bounds can be very conservative.


Since this article is to appear in an MCQMC proceedings,
we assume some background knowledge that would take too
much space to include here. For readers new to those
topics, here are some bibliographic references
that also include some history. Those topics include
discrepancy \cite{chen:sriv:trav:2014}, the ANOVA decomposition \cite[Appendix A]{practicalqmc}, weighted Hilbert spaces \cite{dick:kuo:sloa:2013}, 
digital QMC constructions \cite{dick:pill:2010} (such as Sobol' sequences
and Faure sequences) integration lattices \cite{sloa:joe:1994} and their 
randomizations \cite{lecu:lemi:2000}.
Some further references are given in context. Further background is
given on confidence interval methods as those are not widely
studied in the QMC literature.

The contents of this paper are as follows.
Section~\ref{sec:notation} introduces some notation along with basic
strategies to get an estimate $\hat\mu$ of $\mu$.
Section~\ref{sec:uq} describes the basics of uncertainty quantification.
There are certificates, confidence intervals, and asymptotic confidence
intervals.  The results making the more desirable claims are harder to apply.
Section~\ref{sec:certificates} gives some classical certificates (bracketing
inequalities) for numerical integration of convex functions along with a new one
for completely monotone integrands using QMC
points with nonnegative and nonpositive local discrepancy.
Section~\ref{sec:rqmc} covers RQMC. It emphasizes asymptotic
statistical confidence intervals as $n\to\infty$ and how hard
they are to apply to RQMC with a small number $R$ of independent
replicates of an RQMC rule on $n$ points.  An empirical investigation
in~\cite{ci4rqmc} showed that classical $t$-test intervals based
on $R$ independent replicates are surprisingly reliable and there
are now a few theoretical explanations to show why.
Section~\ref{sec:whqse} looks at a proposal by Tony Warnock
and John Halton that in the notation of this paper is like
using $R$ quasi-random replicates of a QMC rule instead of $R$ random replicates.
Their quasi-standard error has had some empirical success
and also has some failings. The idea might benefit from
a further investigation.
Section~\ref{sec:gail} describes the guaranteed automatic integration
library which aims to sample adaptively until a specified error
criterion $|\hat\mu-\mu|\leqslant\epsilon$ is met, either absolutely
or probabilistically.
Section~\ref{sec:new} presents areas of current active interest
in UQ for QMC: unbiased MCMC, normalizing flows, median of means
and results about $R$ growing with $n$.
Conclusions in Section~\ref{sec:conc} describe how
the present understanding of the problem has many gaps.
It appears that there will
be a role for the analysis of distributions that become
symmetric as $n\to\infty$ without becoming Gaussian.

\section{Notation and basic estimates of $\mu$}\label{sec:notation}

For integers $d\geqslant1$,
we use $1{:}d$ to represent the set $\{1,2,\dots,d\}$.
The cardinality of $u\subseteq1{:}d$ is denoted by $|u|$
and we use $-u=1{:}d\setminus u$.
For $\bsx\in[0,1]^d$ and $u\subseteq1{:}d$ we use
$\bsx_u\in[0,1]^{|u|}$ to represent the components of $\bsx$
with indices in $u$. For $\bsx,\bsz\in[0,1]^d$ we write
$\bsx_u{:}\bsz_{-u}$ for the point with $j$'th component
$x_j$ when $j\in u$ and $j$'th component $z_j$ when $j\not\in u$.
We abbreviate $\bsx_{-\{j\}}{:}\bsz_{\{j\}}$ to $\bsx_{-j}{:}\bsz_j$.
We use $\bsone = (1,1,\dots,1)\in\bbR^d$
and $\bszero = (0,0,\dots,0)\in \bbR^d$.
For $u\subseteq1{:}d$, we use $\partial^uf$ for the partial derivative
of $f$ taken once with respect to each $x_j$ for $j\in u$. By convention
$\partial^\varnothing f=f$.
Our estimate for the integral will be denoted by $\hat\mu$
or by $\hat\mu_n$ when it is important to emphasize
the sample size $n$.  When we have independent replicates,
the $r$'th one will be denoted by $\hat\mu^{(r)}$ or $\hat\mu_n^{(r)}$ as needed.

We say that $a_n=o(b_n)$ if $a_n/b_n\to0$ as $n\to\infty$.
We say that
$a_n = O(b_n)$ if there is finite $c$ with $|a_n|\leqslant c b_n$
for all but finitely many integers $n\geqslant1$.  
We say that $a_n = \Omega(b_n)$ if there
is $c>0$ with $a_n\geqslant c b_n$ for all but finitely many $n$.
We say that $a_n = \Theta(b_n)$ if $a_n = O(b_n)$ and $a_n = \Omega(b_n)$.

This paper surveys many different results from multiple literatures.
As a result, it is necessary for some symbols to be used with
more than one meaning, even as some symbols common in
the literature have been changed to reduce the number of conflicts here.
The specific use cases are different enough to be clear from context.

We focus on the problem of approximating
$$
\mu = \int_{[0,1]^d}f(\bsx)\,\rd\bsx
$$
for $d\geqslant1$.
Formulating a given problem this way 
may require extensive
use of transformations such as those in \cite{devr:1986}
in order to handle domains other than $[0,1]^d$
and distributions other than the uniform one.
Section~\ref{sec:new} points to some new work using normalizing
flows to augment those transformations.
For very smooth $f$ and small $d$, classical integration
methods from \cite{davrab} are quite effective at
estimating $\mu$. 

For larger $d$ and/or less regular
$f$, MC methods can be more accurate.
In plain MC we take $\bsx_i\stackrel{\mathrm{iid}}\sim\bbU[0,1]^d$ and then
\begin{align}\label{eq:muhatn}
\hat\mu_n =\frac1n\sum_{i=1}^nf(\bsx_i).
\end{align}
If $\mu$ exists then $\Pr(\lim_{n\to\infty}\hat\mu_n=\mu)=1$
by the strong law of large numbers. If
$$
\sigma^2 = \int_{[0,1]^d}(f(\bsx)-\mu)^2\,\rd\bsx<\infty
$$
then $\bbE( (\hat\mu_n-\mu)^2)=\sigma^2/n$
and the root mean squared error (RMSE) is $\sigma/\sqrt{n}$.
The $n^{-1/2}$ rate is not as good as the ones for smooth
low dimensional functions in \cite{devr:1986} but remarkably, it requires no
differentiability and is the same in any dimension.

In QMC, we replace uniform random $\bsx_i$ by carefully
chosen deterministic points $\bsx_i\in[0,1]^d$ before
estimating $\mu$ by $\hat\mu_n$ with the same
simple average~\eqref{eq:muhatn} that we use for MC.
These points are usually chosen by methods
that for large $n$,
have a small value for the star discrepancy
\begin{align*}
D_n^*&=D_n^*(\bsx_1,\dots,\bsx_n)
=\sup_{\bsa\in[0,1]^d}|\delta(\bsa)|,\quad\text{where}
\\
\delta(\bsa)&=\frac1n\sum_{i=1}^n\ind\{\bsx_i\in[\bszero,\bsa)\}-\prod_{j=1}^da_j
\end{align*}
is the local discrepancy at $\bsa$.
The QMC counterpart to the law of large numbers is that
if $f$ is Riemann integrable and $D_n^*\to0$ as $n\to\infty$,
then $\hat\mu_n\to\mu$. See \cite{nied:1978} which also
mentions a converse that if $\hat\mu_n\to\mu$ whenever
$D_n^*\to0$, then $f$ must be Riemann integrable.
We will see below that if $f$ is of bounded variation
in the sense of Hardy and Krause (written $f\in\mathrm{BVHK}$)
then $\varepsilon_n=O(D_n^*)$. Since it is possible to find
points with $D_n^*=O(n^{-1+\epsilon})$ for any $\epsilon>0$,
we $\hat\mu_n =O(n^{-1+\epsilon})$, and then we
can get asymptotically lower errors from QMC than
the RMSE of MC. 

The $\epsilon$ in that rate for $\varepsilon_n$  hides powers of $\log(n)$.
Those are present in some worst cases,
but they do not seem to appear in applications \cite{thelogs}.
If $f$ has a continuous
mixed partial derivative taken once with respect to each
of the $d$ components of $\bsx$, then we can show that $f\in\mathrm{BVHK}$
using a result from \cite{frec:1910}. That dominating mixed partial
derivative falls short of the complete $r$-fold differentiability
that appears in the $O(n^{-r/d})$ result mentioned earlier,
and the price we pay is in logarithmic factors.

We will also consider RQMC, a hybrid of MC and QMC.
In RQMC, the points $\bsx_1,\dots,\bsx_n$ are chosen
so that individually $\bsx_i\sim\bbU[0,1]^d$ while collectively
these points have low discrepancy:
for any $\epsilon>0$ and $B>0$,
$\Pr( D_n^*(\bsx_1,\dots,\bsx_n) \leqslant Bn^{-1+\epsilon})=1$
holds for all but finitely many integers $n\geqslant1$. 
We write this as $D_n^*=O(n^{-1+\epsilon})$.
For a survey of RQMC, see \cite{lecu:lemi:2002}.

\section{Uncertainty quantification}\label{sec:uq}

Uncertainty quantification takes things to the next level,
where we want to know something about $\mu_n-\mu$ or $|\mu_n-\mu|$.
In this section we review some well known UQ methods for integration methods.  
Perfect knowledge of $\mu_n-\mu$ implies perfect knowledge
of $\mu$ and then there is no more uncertainty to quantify. 
A known distribution for $\mu_n-\mu$ does not collapse that way.
Neither does perfect knowledge
of $|\hat\mu_n-\mu|$; it simply means that $\mu = \hat\mu_n\pm|\hat\mu_n-\mu|$.
Some methods provide information about the distribution of $|\hat\mu_n-\mu|$.
We start with the more desirable
UQ conclusions and then introduce some
compromises in order to get more usable methods.

By the Koksma-Hlawka inequality \cite{hick:2014},
\begin{align}\label{eq:koksmahlawka}
\varepsilon_n \leqslant D_n^*(\bsx_1,\dots,\bsx_n) V_{\mathrm{HK}}(f)
\end{align}
where $V_{\mathrm{HK}}(f)$ is the total variation of $f$
in the sense of Hardy and Krause.
See \cite{variation} for a definition and some properties
of $V_{\mathrm{HK}}$.  This bound has absolute certainty,
it applies to the specific integrand we have, the
value of $n$ that we used, and even the precise list
of points $\bsx_i$ that we used.

Equation~\eqref{eq:koksmahlawka} falls short of providing an
uncertainty quantification because we ordinarily do not know
either of the factors in it.  It is expensive to compute
$D_n^*$.  The commonly used algorithm of 
\cite{dobk:epps:mitc:1996} costs $O(n^{1+d/2})$. 
Let us suppose that it costs  $\Omega(n^{1+d/2})$.
Then in the time it would take us to compute $D_n^*$ for QMC
we could get $N=\Omega(n^{1+d/2})$ points in MC.
Then MC would get an RMSE of $O(n^{-1/2-d/4})$
in the time that QMC using known $D_n^*$ would get an error of $O(n^{-1+\epsilon})$.
We could in principle precompute $D_n^*$ for a number
of point sets, but then we would face the second
problem: $V_{\mathrm{HK}}(f)$  is extremely hard to compute,
far worse than $\mu$ itself.
For smooth enough $f$, it is a sum of $2^d-1$ integrals
of the absolute values of some mixed partial derivatives of $f$ \cite{variation}.
For less smooth $f$, the problem is even harder.
As a result, \eqref{eq:koksmahlawka} serves to show us
that QMC can be much better than MC, but it does not
generally let us verify that that has happened for given $f$ and $n$.

More modern bounds are derived for reproducing kernel
Hilbert spaces (RKHSs) of integrands.
The unanchored Sobolev spaces
described in \cite{dick:kuo:sloa:2013} are a prominent example.
For every $u\subseteq1{:}d$, define a weight $\gamma_u>0$
and then let $\bsgamma$ represent all $2^d$ of those weights.
The squared norm in this space is
\begin{align}\label{eq:rkhsnorm}
\Vert f\Vert_{\bsgamma}^2 = \sum_{u\subseteq1{:}d}
\frac1{\gamma_u}
\int_{[0,1]^{|u|}}\biggl( \int_{[0,1]^{d-|u|}} \partial^uf(\bsx)\rd\bsx_{-u}\biggr)^2\rd\bsx_u,
\end{align}
for weak partial derivatives $\partial^u f$.  
The error $|\hat\mu-\mu|$ is upper bounded
by $\Vert f\Vert_{\bsgamma}$ times a corresponding $L_2$
discrepancy measure defined by a reproducing kernel \cite{dick:kuo:sloa:2013}.
The widely used product weights
have $\gamma_u=\prod_{j\in u}\gamma_j$ for $\gamma_j>0$.
When  $\gamma_j$ decays rapidly with increasing $j$,
for example with $\gamma_j = j^{-\eta}$ for $\eta>1$,
then strong tractability  (see \cite{sloa:wozn:1998}) holds,
in which the number of function evaluations required
to get an error below $\epsilon\Vert f\Vert_{\bsgamma}$
does not depend on the dimension $d$.
For $\eta>2$, strong tractability holds with
errors $O( n^{-1+\delta})$ for any $\delta>0$.
This tractability property has been widely used to design better
QMC point sets.  
However evaluating $\Vert f\Vert_{\bsgamma}$ for an integrand $f$ 
is usually quite difficult and so, just like the Koksma-Hlawka
inequality,  the error bounds in weighted
spaces do not provide an uncertainty quantification.

The MC setting is more favorable for UQ.
By the Chebyshev inequality we know that for $\lambda>1$,
\begin{align}\label{eq:chebineq}
\Pr\Bigl( |\hat\mu_n-\mu|\geqslant \frac{\lambda\sigma}{\sqrt{n}}\Bigr)\leqslant \frac1{\lambda^2}.
\end{align}
Equation~\eqref{eq:chebineq}
is only a probabilistic bound on $\varepsilon_n$, rather than
a certain bound like~\eqref{eq:koksmahlawka}.
However, the unknown quantity $\sigma$ in it is much
easier to work with than $V_{\mathrm{HK}}$ or $\Vert f\Vert_{\bsgamma}$ is.
In some settings we may know $\sigma$, or a bound for $\sigma$.
For example, if $0\leqslant f(\bsx)\leqslant 1$, then
$\sigma\leqslant 1/2$.  

From the Chebyshev inequality we see that
\begin{align}\label{eq:chebci}
\Pr\Bigl( 
 \hat\mu_n -\frac{\lambda\sigma}{\sqrt{n}}
\leqslant\mu\leqslant
 \hat\mu_n +\frac{\lambda\sigma}{\sqrt{n}}
\Bigr)\geqslant 1-\frac1{\lambda^2}.
\end{align}
For known $\sigma$, equation~\eqref{eq:chebci}  describes a confidence interval:
a computable random interval with at least the desired
probability of containing the true value of $\mu$.
To have that probability be at least $99$\% we may
take $\lambda=10$.  
We have given up the certainty of our bound
in order to get a method based on $\sigma$ instead of $V_{\mathrm{HK}}$
or $\Vert f\Vert_{\bsgamma}$.
The Chebyshev inequality~\eqref{eq:chebineq}
is tight because for any $\lambda>1$
there is a distribution for $\varepsilon_n$  that makes it an equality.

The Chebyshev confidence interval is not commonly
used. 
Even when we have a bound for $\sigma$, the interval
can be very conservative.  As mentioned above, to get
99\% confidence, we can use $\lambda =10$.
A Gaussian random variable only has probability about $1.5\times 10^{-23}$
to be $10$ or more standard deviations from its mean.
Here the error probability bound of $0.01$ is quite loose, despite the
fact that it came from an inequality that is tight.

The usual UQ for MC is based on the central limit theorem (CLT) under which
\begin{align}\label{eq:clt}
\lim_{n\to\infty}\Pr\Bigl( \frac{\hat\mu_n-\mu}{\sigma/\sqrt{n}}\leqslant z\Bigr)=\Phi(z) 
\end{align}
where $\Phi$ is the cumulative distribution function of the
Gaussian distribution with mean $0$ and variance $1$.
In addition to being probabilistic, it is also asymptotic in $n$.
Its popularity stems from its wide applicability
after we estimate $\sigma$ from the same data we use for $\hat\mu$.

The CLT typically gives much narrower confidence intervals than
Chebyshev's inequality provides.
In plain MC with $n\geqslant2$ we can estimate $\sigma^2$ by
\begin{align}\label{eq:samplevar}
s^2=\frac1{n-1}\sum_{i=1}^n(f(\bsx_i)-\mu)^2.
\end{align}
The peculiar denominator $n-1$ makes $\bbE(s^2)=\sigma^2$.
If $\sigma^2<\infty$, then equation~\eqref{eq:clt} also holds with
$s$ replacing $\sigma$.
Then for $0<\alpha<1$,
\begin{align}\label{eq:cltci}
\lim_{n\to\infty}
\Pr\Bigl(
\hat \mu_n - \frac{s}{\sqrt{n}}\Phi^{-1}(1-\alpha/2)
\leqslant
\mu \leqslant \hat \mu_n + \frac{s}{\sqrt{n}}\Phi^{-1}(1-\alpha/2)
\Bigr)=1-\alpha.
\end{align}
Commonly chosen values for $\alpha$ are $0.05$ and $0.01$
which provide, respectively, asymptotic $95$\% and $99$\% confidence
intervals for $\mu$.  Here $\Phi^{-1}(0.975)\doteq 1.96$ and
$\Phi^{-1}(0.995)\doteq 2.58$.

The usual statistical confidence interval is a bit wider than
the one in~\eqref {eq:cltci} because it makes an adjustment
for the sampling uncertainty in $s^2$.  The interval in equation~\eqref{eq:cltci}
is not exact, even when $f(\bsx)$ has a Gaussian distribution.  We may correct this using 
\begin{align}\label{eq:stdci}
\lim_{n\to\infty}
\Pr\Bigl(
\hat \mu_n - \frac{s}{\sqrt{n}}t^{1-\alpha/2}_{(n-1)}
\leqslant
\mu \leqslant \hat \mu_n + \frac{s}{\sqrt{n}}
t^{1-\alpha/2}_{(n-1)}
\Bigr)=1-\alpha
\end{align}
where $t^{1-\alpha/2}_{(n-1)}$ is the $1-\alpha/2$ quantile
of Student's $t$ distribution on $n-1$ degrees of freedom.
The correction for degrees of freedom makes a meaningful difference
when $n$ is small which is valuable
for RQMC as we will see in Section~\ref{sec:rqmc} . The coverage is exactly $1-\alpha$
if $f(\bsx_i)$ are Gaussian (and $n\geqslant 2$) and the limit~\eqref{eq:stdci}
holds if $f(\bsx)$ has finite variance.

We would prefer a non-asymptotic and nonparametric confidence interval.
That would provide exactly $1-\alpha$ coverage probability without
requiring that $f(\bsx)$ have a distribution belonging to a finite dimensional
family, such as the Gaussian distributions.  
Unfortunately, exact nonparametric
confidence intervals do not exist under conditions
given in Bahadur and Savage \cite{baha:sava:1956}.  
They consider a set $\calF$ of distributions on $\bbR$.
Letting $\mu(F)$ be $\bbE(Y)$ when $Y\sim F\in\bbR$, their conditions are:
\begin{compactenum}[\quad (i)]
\item For all $F\in\calF$, $\mu(F)$ exists and is finite.
\item For all $m\in\bbR$ there is $F\in\calF$ with $\mu(F)=m$.
\item $\calF$ is convex: if $F,G\in\calF$ and $0<\pi<1$,
  then $\pi F+(1-\pi)G\in \calF$.
\end{compactenum}
Under these conditions, their Corollary 2 shows that a Borel set constructed
based on $Y_1,\dots,Y_N\stackrel{\mathrm{iid}}
\sim F$
that contains $\mu(F)$ with probability at least $1-\alpha$
also contains any other $m\in\bbR$ with probability at least $1-\alpha$.
More precisely: we can get a confidence set, but not a useful one.
They allow $N$ to be random so long as $\Pr(N<\infty)=1$.

There are settings where non-asymptotic confidence intervals
can be constructed.  To get them, we need some extra knowledge
about the distribution of $f(\bsx)$ for $\bsx\sim\bbU[0,1]^d$
that does not come from sampling.
We saw above that knowing $\sigma$ allows us to get a conservative
confidence interval based on the Chebyshev inequality.

Suppose that independent $f(\bsx_i)$ satisfies known finite upper and lower bounds. Hoeffding's inequality \cite{hoef:1963} then provides confidence intervals
for $\mu$ in terms of those bounds.  In the MC context
it is natural to assume that the bounds are the same for all $i=1,\dots,n$.
Because the bounds are known we can make a linear adjustment
to $f$ and then without loss of generality
we may then suppose that
$0\leqslant f(\bsx)\leqslant 1$ for all $\bsx\in[0,1]^d$.
Then for $t>0$, Hoeffding's inequality gives
\begin{align}\label{eq:hoeffding}
\Pr( |\hat\mu-\mu|\geqslant t/n)\leqslant 2\exp(-t^2/n).
\end{align}
This holds for any distribution of $f(\bsx)$ with
$\Pr(0\leqslant f(\bsx)\leqslant1)=1$.  
It is possible because that set of bounded random variables does
not satisfy Bahadur and Savage's condition~(ii).

A very natural setting with known bounds arises
when $f(\bsx)$ takes the value $1$ for $\bsx\in A$
and $0$ otherwise. Then $\mu$ can be interpreted
as the probability that $\bsx\in A$ when $\bsx\sim\bbU[0,1]^d$. 

Because~\eqref{eq:hoeffding} holds for any 
random variable bounded between $0$ and $1$,
it is very conservative for some of them.  For instance
if $|f(\bsx)-1/2|\leqslant 1/2000$ always holds then the Hoeffding
interval is 1000 times wider than it has to be.
This extra width is a consequence of imperfect knowledge
that specifies a wider than necessary interval. 
Empirical Bernstein inequalities \cite{maur:pont:2009}
make use of the sample variance from~\eqref{eq:samplevar}
to get less conservative intervals that nonetheless
always have the desired coverage level.

There is ongoing work in forming confidence intervals
that have guaranteed coverage for bounded random
variables and finite $n$ while being asymptotically
as narrow as possible.
See \cite{aust:mack:2022} and \cite{waud:ramd:2024}.
Some of this work allows for dependence among the
values of $f(\bsx_i)$. Some of it provides confidence
statements that are always valid in that 
they produce intervals $[a_n,b_n]$ with
$\Pr( a_n \leqslant \mu\leqslant b_n,\forall n\geqslant1)\geqslant1-\alpha$.
These confidence sequences  have to be
wider at each $n$ than if we only require validity at one value of $n$.

When $0\leqslant f(\bsx)\leqslant 1$ holds then this also holds for an average
of $n$ evaluations of $f$ at some RQMC points.
Jain et al.\ \cite{jain:etal:2025} study confidence interval widths
using the empirical Bernstein and related betting methods from~\cite{waud:ramd:2024}
for RQMC using $R$ random replicates of $n$ RQMC points.
They impose a budget constraint that $nR=N$. If $f$ is regular enough for
RQMC to be very effective, then the optimal $n$ grows only very slowly
as $N\to\infty$. The resulting confidence intervals narrow at a faster
rate in $N$ than the ones based on Monte Carlo do, but they narrow
at a slower rate in $N$ than the RQMC standard deviations do.


\section{Certificates and bracketing}\label{sec:certificates}

A certificate is a
(finite) computable number that is mathematically guaranteed
to be no smaller than $\varepsilon_n$.
This concept is widely used in 
convex optimization \cite{boyd:vand:2004}.
There the goal is to minimize a convex function $f$
over  $\bsx\in C$ for a convex set $C$.
Once $f$ has been evaluated at points $\bsx_i$ for $i=1,\dots,n$,
we can be sure that  the minimum is no larger than $\min( f(\bsx_1),\dots,f(\bsx_n))$.
This upper  bound does not require convexity of $f$.
If $f$ is convex, then with enough data, we can get a lower bound.
A convex function cannot  go below any of its supporting hyperplanes. 
Then $f$  has to be above the pointwise maximum of all its supporting hyperplanes
that the algorithm has encountered.
Now the minimum value of that pointwise maximum
provides a computable lower bound for the minimum of $f$.
See Chapter 5 of \cite{boyd:vand:2004} on duality. The user then has 
certainty that the answer lies between two computable numbers $a$
and $b$.  If we take $(a+b)/2$ as the estimate then we know
that the error is no larger than $(b-a)/2$.

There are a few instances in numerical integration where
we can get a certificate.  They are
known as bracketing inequalities.
Here we review those cases and then present a recent
quasi-Monte Carlo version based on nonnegative local discrepancy
and completely monotone functions.

It is obvious that if $f(\bsx)$ obeys an upper bound on a
set, then so does its average over that set.  Similarly for lower
bounds. 
Let $f$ be a nondecreasing function of $x$ over $[0,1]$.
Then $f((i-1)/n)\leqslant f(x)\leqslant f(i/n)$ for all $x\in[(i-1)/n,i/n]$
and $i=1,\dots,n$.
This allows us to bracket $\mu$ between the values of the left and
right endpoint rules:
\begin{align}\label{eq:bracketlr}
\hat\mu_{\mathrm{Left}}:= \frac1n\sum_{i=1}^nf\Bigl(\frac{i-1}n\Bigr)
\leqslant\mu\leqslant
\frac1n\sum_{i=1}^nf\Bigl(\frac{i}n\Bigr)
=:\hat\mu_{\mathrm{Right}}.
\end{align}
By raising the computation from $n$ to $n+1$ function evaluations
we get an interval for $\mu$.

A more interesting bracketing inequality holds when $f$
is a convex function on $[0,1]$.
Now the average of $f$ over $[(i-1)/n,i/n]$ 
is no smaller than $f((i-1/2)/n)$ by Jensen's inequality
and no larger than $f(((i-1)/n)+f(i/n))/2$ by a secant inequality.
As a result, $\hat\mu_{\mathrm{Mid}}\leqslant \mu\leqslant \hat\mu_{\mathrm{Trap}}$ where
\begin{align}\label{eq:bracketmt}
\hat\mu_{\mathrm{Mid}} & = 
\frac1n\sum_{i=1}^n f\Bigl( \frac{i-1/2}n\Bigr)\quad\text{and}
\quad \hat\mu_{\mathrm{Trap}}  = \frac1n\Bigl[
\frac12f(0)
+\sum_{i=1}^{n-1}f\Bigl(\frac{i}n
\Bigr)
+\frac12f(1)\Bigr]
\end{align}
are the midpoint and trapezoid rules, respectively.
This bracketing requires $2n+1$ function evaluations
because there is no overlap among the points used by the
two rules.

For smooth enough integrands, these error bounds from
bracketing inequalities are far wider than necessary \cite[Chapter 2]{davrab}.
If $f'$ is integrable, then the left and right endpoint rules 
have an error that
is asymptotically $O(1/n)$. If we average them
then we get the trapezoid rule with error $O(1/n^2)$
if $f'\in L_2[0,1]$ \cite{cruz2002sharp}.

The same issue arises when we bracket the integrand
between the midpoint and trapezoid rules.
If $f''$ is continuous, then
$\hat\mu_{\mathrm{Mid}}= \mu - f''(z_{\mathrm{M}})/(24n^2)$
and $\hat\mu_{\mathrm{Trap}} = \mu + f''(z_{\mathrm{T}})/(12n^2)$
for points $z_{\mathrm{M}}, z_{\mathrm{T}}\in(0,1)$.
These oppositely signed errors can be largely canceled
by Simpson's rule. It satisfies
$\hat\mu_{\mathrm{Simp}} = (2\hat\mu_{\mathrm{Mid}}+\hat\mu_{\mathrm{Trap}})/3$.
If $f^{(4)}$ is
continuous on $[0,1]$ then by equation (2.2.6) of \cite{davrab}
$$
\hat\mu_{\mathrm{Simp}} = \mu  + \frac{f^{(4)}(z_{\mathrm{S}})}{180n^4}
$$
for some $z_{\mathrm{S}}\in(0,1)$. Note that this estimate uses $2n+1$
function evaluations.
Then for a convex function with four continuous derivatives
we get an estimate with error $O(n^{-4})$
known to lie within a computable interval of width $O(n^{-2})$.
It is hard to just barely bound an error.

There is a discussion of practical error estimation
in \cite[Chapter 4.9]{davrab}. Many of them require
special conditions on $f$.  One of the most applicable methods
is to get two estimates, such as $\hat\mu_n$ and $\hat\mu_{2n}$.
Then $|\hat\mu_{2n}-\hat\mu_n|$ is a rough estimate of the
error in $\hat\mu_n$.  It is also likely to be an overestimate of
the error in $\hat\mu_{2n}$ which will usually be a better
estimate than $\hat\mu_n$.  Despite its wide use, this
method can clearly give an unreliable estimate.
There is an analysis in \cite{lyne:1983} showing how
using the difference between two different integration
rules can be an unreliable way to quantify uncertainty.

For $d=1$ and a function with $r\geqslant1$ continuous derivatives
we can get estimates of $\mu$ with an error of $O(n^{-r})$
with an implied constant that depends on $\Vert f^{(r)}\Vert_\infty$.
For errors that decay like $n^{-r}$, an investigator may know
enough about their integrand to reason that $\Vert f^{(r)}\Vert_\infty$ cannot be extremely large compared to $n^r$ 
times the implied constant in an error bound.
Then, even lacking a known bound for $\Vert f^{(r)}\Vert_\infty$
they may be confident that some value of $n$ is good enough.
For example, they might be confident that
the error $|\hat\mu_n-\mu|$ is comparable in size
to a floating point error that they already find acceptable.

The situation is very different for multivariate problems.
Then there is a curse of dimension from \cite{bakh:1959}.
Here is a sketch based on \cite{dimo:2008}.
Suppose that the absolute value of the partial derivative of $f$ taken $a_j\geqslant0$
times with respect to component $x_j$
is never larger than some $M\in(0,\infty)$ for any $\bsx\in[0,1]^d$
when $\sum_{j=1}^da_j\leqslant r$.
Then there still exists $B>0$ such that
for any rule of the form $\sum_{i=1}^nw_if(\bsx_i)$
for $w_i\in\bbR$ and $\bsx_i\in[0,1]^d$
there is $f$ with $|\hat\mu(f)-\mu(f)|>Bn^{-r/d}$.
For randomized points $\bsx_i$, the RMSE
cannot be below $B'n^{-1/2-r/d}$ for some $B'>0$.
For large $d$, we cannot simply ignore the
integration error.  
It is also not easy to use methods tuned
for $r\geqslant 3$. Dimov \cite{dimo:2008} describes
them as `sophisticated'.

Multivariate certificates for integration are rare.
We can apply bounds~\eqref{eq:bracketlr}
and~\eqref{eq:bracketmt} to iterated integrals.
Then if $f$ is nondecreasing in each component
of $\bsx$ or if $f$ is convex, we get certificates
but only at rates $O(n^{-1/d})$ and $O(n^{-2/d})$
for estimates that  have errors
$O(n^{-2/d})$ and $O(n^{-4/d})$, respectively,
when $f$ is smooth enough.

Another use of convexity is given in \cite{gues:schm:2004}.
For a convex function $f$ defined on a simplex
$\bbS\subset\bbR^d$, with vertices at $\bsx_0,\bsx_1,\dots,\bsx_d$
$$
f\Bigl(\frac1{d+1}\sum_{i=0}^d\bsx_i\Bigr)
\leqslant
\frac1{\mathrm{vol}(S)}\int_{S}f(\bsx)\rd\bsx \leqslant
\frac1{d+1}\sum_{i=0}^df(\bsx_i).
$$
This method can be applied to domains that are partitioned into
simplices, allowing us to generalize the rule in~\eqref{eq:bracketmt}
while reusing any function evaluations that are on a vertex of
more than one simplex.
For integration over $[0,1]^d$ partitioning this domain
into simplices would require at least the $2^d$ function evaluations on the
corners of the unit cube, and would therefore not scale well
to large dimensions.

It is possible to get a certificate for integrals over $[0,1]^d$
by generalizing equation~\eqref{eq:bracketlr}.
The integrand there is nondecreasing  and the input points
are `biased low' for the lower bound and `biased high'
for the upper bound. 
We can generalize the notion of points being biased low by
requiring that $\delta(\bsa)\geqslant 0$ holds for all
$\bsa\in[0,1]^d$. This property is known as
`nonnegative local discrepancy' (NNLD).  An analogous `nonpositive
local discrepancy' (NPLD) property has $\delta(\bsa)\leqslant 0$ for
all $\bsa\in[0,1]^d$.

The extension to general $d\geqslant 1$ requires a strong
monotonicity condition. In particular, it is not enough for
$f$ to be nondecreasing in each component of $\bsx$
individually with the other $d-1$ components held fixed.
We require $f$ to be completely monotone as described
by \cite{aist:dick:2015}.

The function $f$ is completely monotone on $[0,1]^d$
if
$$
\sum_{v\subseteq u}(-1)^{|u-v|}f(\bsx_{-v}{:}\bsz_v)\geqslant0
$$
holds for every nonempty $u\subseteq1{:}d$ and
all $\bsx,\bsz\in[0,1]^d$ with $\bsx\leqslant\bsz$ componentwise.
Taking $u = \{j\}$ this requires that
$f(\bsx_{-j}{:}\bsz_j)\geqslant f(\bsx)$ so $f$ is nondecreasing
in each of the $d$ variables.  For $u =\{j,k\}$ for $j\ne k$
we see that
$f(\bsx_{-j}{:}\bsz_j)-f(\bsx)$ is nondecreasing in $x_k$.
Generally an $r$-fold difference of differences is nonnegative.

If we use `increasing' as a less precise but more vivid term
for `nondecreasing', then we may say that $f$ is increasing in every
variable, the amount by which it is increasing in any variable
is increasing in any other variable, that amount in turn is increasing
in any third variable, and so on.
This is a strict requirement.  It is satisfied if $f$ is the cumulative
distribution function (CDF) of some random random vector $\bsX$, i.e., 
$f(\bsx) = \Pr( \bsX\leqslant \bsx\ \text{componentwise})$.
As a partial converse, if $f$ is also right continuous, then
\begin{align}\label{eq:compmono}
f(\bsx) = f(\bszero) +\lambda\nu([\bszero,\bsx])
\end{align}
holds for some probability measure $\nu$ and some $\lambda\geqslant 0$
\cite{aist:dick:2015}.

\begin{theorem}
Let $f$ be of the form in equation~\eqref{eq:compmono}.
If $\bsone-\bsx_1,\dots,\bsone-\bsx_n$ have NNLD, then
$$
\frac1n\sum_{i=1}^nf(\bsx_i)\geqslant \int_{[0,1]^d}f(\bsx)\rd\bsx.
$$
If $\bsx_1,\dots,\bsx_n$ have NPLD and either
all $\bsx_i\in[0,1)^d\cup\{\bsone\}$, or $\nu$ is absolutely
continuous with respect to Lebesgue measure, then
$$
\frac1n\sum_{i=1}^n f(\bsone-\bsx_i)\leqslant \int_{[0,1]^d}f(\bsx)\rd\bsx.
$$
\end{theorem}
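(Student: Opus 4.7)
The plan is to substitute the representation~\eqref{eq:compmono} into both sides of each inequality, swap sums and integrals via Fubini, and reduce the two claims to pointwise comparisons between an empirical box count and a box volume; these can then be handled by NNLD or NPLD applied to an appropriate box.

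Since $\nu$ is a probability measure and $\lambda\geqslant 0$, Fubini gives
\begin{align*}
\int_{[0,1]^d}f(\bsx)\,\rd\bsx &= f(\bszero) + \lambda\int_{[0,1]^d}\prod_{j=1}^d(1-y_j)\,\rd\nu(\bsy),\\
\frac1n\sum_{i=1}^n f(\bsx_i) &= f(\bszero)+\lambda\int_{[0,1]^d}\frac1n\sum_{i=1}^n\ind\{\bsy\leqslant\bsx_i\}\,\rd\nu(\bsy),
\end{align*}
with the analogous identity for $\tfrac1n\sum_i f(\bsone-\bsx_i)$. The $f(\bszero)$ terms cancel and $\lambda\geqslant 0$ is a nonnegative factor, so each inequality reduces to showing that the bracketed integrand has the correct sign for $\nu$-almost every $\bsy$.

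For the first claim, write $\ind\{\bsy\leqslant\bsx_i\}=\ind\{\bsone-\bsx_i\in[\bszero,\bsone-\bsy]\}$. The closed box contains its half-open counterpart $[\bszero,\bsone-\bsy)$, so the empirical count in the closed box is at least that in the half-open box, and NNLD of $\bsone-\bsx_1,\dots,\bsone-\bsx_n$ bounds the latter below by $\prod_{j=1}^d(1-y_j)$. This settles the first inequality with no boundary subtlety.

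For the second claim, the analogous rewrite yields $\ind\{\bsy\leqslant\bsone-\bsx_i\}=\ind\{\bsx_i\in[\bszero,\bsa]\}$ with $\bsa=\bsone-\bsy$, and we need $\frac1n\sum_i\ind\{\bsx_i\in[\bszero,\bsa]\}\leqslant\prod_{j=1}^d a_j$. The direction now runs against us: NPLD controls the half-open count from above, but the closed count can be strictly larger, so the main obstacle is to rule out contributions from the upper boundary $\partial[\bszero,\bsa]\setminus[\bszero,\bsa)$. When $\bsa<\bsone$ componentwise, the remedy is to apply NPLD at some $\bsa'>\bsa$ componentwise with $\bsa'\in[0,1]^d$ and let $\bsa'\downarrow\bsa$; monotone convergence of indicators closes the gap and $\prod_j a'_j\to\prod_j a_j$. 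Under the hypothesis that each $\bsx_i$ lies in $[0,1)^d\cup\{\bsone\}$, the point $\bsone$ contributes to the closed box only at $\bsa=\bsone$ (where the bound is trivially satisfied), while for other $\bsa$ the strict inequality $x_{ij}<1$ makes any coordinate with $a_j=1$ automatic and lets us perturb only the remaining coordinates. Under the alternative hypothesis that $\nu$ is absolutely continuous with respect to Lebesgue measure, the set $\{\bsy:y_j=0\text{ for some }j\}$ has $\nu$-measure zero, so it suffices to treat $\bsa<\bsone$ componentwise and the same limiting argument applies.
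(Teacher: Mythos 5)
Your argument is correct and complete. The paper itself does not prove this result; it simply cites Theorem~1 of Gnewuch, Kritzer, Owen and Pan (2024), so you have supplied an actual proof where the paper only gives a pointer. Your route is the natural one and, as far as I can tell, matches the spirit of the cited proof: write $f(\bsx)-f(\bszero)=\lambda\int\ind\{\bsy\leqslant\bsx\}\,\rd\nu(\bsy)$, apply Tonelli (everything is nonnegative, so there is no integrability issue), and reduce each inequality to a pointwise comparison of an empirical count of a box anchored at the origin with its volume. The first inequality is clean because the closed box dominates the half-open box on which NNLD is stated. For the second inequality you correctly identify the only delicate point, namely that NPLD controls half-open boxes while the representation of $f$ produces closed boxes, and your two-pronged treatment is sound: when $\bsa=\bsone-\bsy<\bsone$ componentwise, shrink $\bsa'\downarrow\bsa$ from above and use continuity of $\prod_j a_j'$; when some $a_j=1$, either the hypothesis $\bsx_i\in[0,1)^d\cup\{\bsone\}$ lets you perturb only the coordinates with $a_j<1$ (with $\bsa=\bsone$ trivial), or absolute continuity of $\nu$ makes the set $\{\bsy: y_j=0 \text{ for some } j\}$ $\nu$-null so that it can be discarded. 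This also explains, as your write-up makes clear, exactly why the theorem's second part carries those extra hypotheses while the first part does not.
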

\begin{proof}
This is Theorem 1 of \cite{gnew:krit:owen:pan:2024}.
\end{proof}

To get a certificate we can use $n$ points that have NNLD
and $n$ points that have NPLD. Gabai \cite{gaba:1967}
showed that the Hammersley sequence in $[0,1]^2$
(in any base $b\ge2$)
has NNLD. That is generalized in \cite{gnew:krit:owen:pan:2024}
to any digital net in $[0,1]^d$ where all of the generator matrices
are permutation matrices. They also show that some rank one
lattices have NNLD.  The proofs of those NNLD results are based
on the theory of associated random variables from
reliability theory \cite{esar:pros:walk:1967}. 

NPLD points are harder to construct than NNLD
points. At first this is surprising.
If $x_1,\dots,x_n\in[0,1]$ have NNLD then
they `oversample' $[0,a)$ so they
undersample $[a,1]$.  Then $1-x_i$ undersample $[0,a]$ and therefore they undersample $[0,a)$.
As a result, $1-x_1,\dots,1-x_n\in[0,1]$ have NPLD.
For $d\geqslant2$ it is no longer true that NNLD points $\bsx_i$
must provide NPLD points $\bsone-\bsx_i$.  The root of the problem
is that extra points $\bsx_i\in[\bszero,\bsa)$ implies
fewer points $\bsx_i\in[0,1]^d\setminus[\bszero,\bsa)$.
But this set is not a hyperrectangle, and so neither is
$\{ \bsone-\bsx\mid \bsx \in [0,1]^d\setminus[\bszero,\bsa)\}$.
The NPLD property requires undersampling of hyperrectangles
containing the origin.

There are two basic constructions for NPLD points in \cite{gnew:krit:owen:pan:2024}.
For $d=1$, the points $x_i = i/n$ for $i=1,\dots,n$ have NPLD.
For $d=2$ and Hammersley points $\bsx_i\in[0,1)^2$ the
points $\tilde\bsx_i = (\bsx_{i1}+1/n,1-\bsx_{i2})$ have NPLD.
This `shift-flip' transformation is from \cite{dick:krit:2006}.
Tensor products of NNLD point sets have NNLD and tensor
products of NPLD point sets have NPLD \cite{gnew:krit:owen:pan:2024}.

The greater challenge than finding NNLD and NPLD points
is the requirement that $f$ be completely monotone.
It can be weakened by finding a control variate function $g$
with $\int_{[0,1]^d}g(\bsx)\rd\bsx=0$ such that
$f+g$ is completely monotone, and then averaging $(f+g)(\bsx_i)$. 
This has not been explored
in the literature.  For some $f$ it would work to take
$g(\bsx) = c\prod_{j=1}^d(x_j-1/2)$ for large enough $c>0$.
Such a function $g$ is not `QMC friendly' because its ANOVA
decomposition has all of its variance in the highest-order
$d$-dimensional term.  As a result, for large $d$ we
expect $(1/n)\sum_{i=1}^ng(\bsx_i)$ to converge slowly
to $\mu$.  For instance, $g$ has a very large Sobolev
norm in the usual unanchored spaces. 
Using~\eqref{eq:rkhsnorm} for $\gamma_j=j^{-\eta}$
we get $\partial^ug=c\prod_{j\not\in u}(x_j-1/2)$ and then
\begin{align*}
\Vert  g\Vert_{\bsgamma}^2 
&= c^2\sum_{u\subseteq1{:}d}\frac1{\gamma_u}
\int_{[0,1]^{|u|}}\biggl(\int_{[0,1]^{d-|u|}}
\prod_{j\not\in u}(x_j-1/2)\rd\bsx_{-u}\biggr)^2\rd\bsx_u\\
&=c^2\sum_{u\subseteq1{:}d}1_{u=1{:}d} \prod_{j\in u}j^\eta\\
&=c^2(d!)^\eta.
\end{align*}
For $\eta =2+\varepsilon$ with $\varepsilon>0$, we get
strong tractability.
The number of function evaluations to
get a QMC error for $g$ below $\epsilon$
can be brought below $c(d!)^{1+\varepsilon/2}$ times a constant 
that depends on $\epsilon$
but does not depend on $d$. The resulting rate in $d$ is quite unfavorable
because $g$ has a very high norm in the weighted space.

Convergence rates for known NNLD and NPLD constructions
show a dimension effect. The best 
constructions for even $d$ in \cite{gnew:krit:owen:pan:2024} arise by taking
tensor products of two dimensional Hammersley point sets on $m$
inputs (for NNLD points) along with tensor products of Hammersley
point sets after the shift-flip transformation (for NPLD points).
For $f\in\mathrm{BVHK}$ the resulting rule on $n=m^{d/2}$
points provides an error of $O(n^{-2/d+\epsilon})$.
It is not known whether better convergence rates can be
attained by other NNLD and NPLD constructions.
For $d=2k+1$, one can either use a Cartesian product
of $k+1$ two dimensional sets (ignoring one component)
or use $k$ two dimensional sets and one endpoint rule.

While the $O(n^{-2/d+\epsilon})$  rate shows a strong dimension effect, it is better than
the rate for deterministic sampling of bounded functions that are 
simply nondecreasing in each
variable individually.  Deterministic methods cannot be better than
$O(n^{-1/d})$, and random methods cannot  have a better RMSE than
$O(n^{-1/d-1/2})$. See  \cite{papa:1993} for precise statements with proof.

The bracketing rules for convex functions that are twice continuously
differentiable provide guaranteed intervals of width $O(n^{-2/d})$
for $\mu$.  This is the optimal rate for integration of convex bounded
functions.
If $\calF=\{f:[0,1]^d\to[0,1]\mid\text{$f$ is convex}\}$ 
then there exists $c_d>0$ such that 
any deterministic integration rule has $\sup_{f\in\calF}|\hat\mu(f)-\mu(f)|
\geqslant c_dn^{-2/d}$ when
$n = (2m)^d/2$ for some integer $m\geqslant1$ \cite{kats:nova:petr:1996}.
The deterministic rule are allowed to be adaptive, selecting $\bsx_i$
based on $f(\bsx_{i'})$ for $i'<i$.


\section{Randomized quasi-Monte Carlo}\label{sec:rqmc}

In RQMC our points $\bsx_1,\dots,\bsx_n$ are individually $\bbU[0,1]^d$
while collectively having a small $D_n^*$.
We can make $R$ statistically independent replicates $\hat\mu^{(1)},\dots,\hat\mu^{(R)}$
of $\mu$ using such an RQMC procedure.
Then $\hat\mu_n^{(r)}=\hat\mu^{(r)}$ are IID with $\bbE(\hat\mu^{(r)})=\mu$.
If the RQMC points have $D_n^*=O(n^{-1+\epsilon})$ then
\begin{align}\label{eq:muhatrqmc}
\hat\mu  =\frac1R\sum_{r=1}^R\hat\mu^{(r)}
\end{align}
has RMSE $O(R^{-1/2}n^{-1+\epsilon})$.
If all of the mixed partial derivatives of $f$ taken at most
once with respect to each of the $d$ variables are
in $L_2[0,1]^d$, and we have scrambled some digital
nets via the algorithm from \cite{rtms}
or from \cite{mato:1998:2},
then $\hat\mu$ from~\eqref{eq:muhatrqmc}
has an RMSE that is $O(R^{-1/2}n^{-3/2+\epsilon})$.
Even higher order rates in $n$ are available from randomizations
of higher order digital nets \cite{dick:2011}.

These rates show that for a given number $nR$ of function
evaluations, we expect a better estimate $\hat\mu$
by taking larger $n$ and smaller $R$.
The RMSE of $\hat\mu$ describes the quality of our
estimate of $\mu$ but not the quality
of our UQ.  Taking a larger value of $R$ will give better
UQ, and this presents a fundamental tradeoff that
we don't have to consider in plain MC.
Here we consider fixed $R$ as $n\to\infty$.
Some comments on $R$ changing with $n$
are in Section~\ref{sec:new}.

We have two sample sizes to consider, $n$ and $R$.
The customary sampling results in statistics have been developed
for $n\to\infty$ independent observations.
In RQMC we study $n\to\infty$, but for the very dependent
observations used to create $\hat\mu_n^{(r)}$. Then there are
$R$ replicates.  While those are independent, we ordinarily
prefer small $R$ instead of $R\to\infty$.
The large sample size $n$ comes from
dependent data and the independent sample size $R$
is small, so we do not have the large number of independent estimates
that appears in most statistical theory.

We begin by presenting statistical results for $n\to\infty$
IID observations.  Those results provides only
a limited understanding of the accuracy of RQMC
confidence intervals. That understanding combined with some
knowledge of how RQMC works was used to design an extensive
empirical investigation in \cite{ci4rqmc}.
Those empirical results have been followed by
theoretical explanations of them and this is an
active area of research.

Two important issues are how the error in the CLT~\eqref{eq:clt}
decreases with $n$ and how quickly the coverage of the
standard Student's $t$ confidence interval~\eqref{eq:stdci}
approaches $1-\alpha$ as $n\to\infty$.
These are most often studied through moment quantities.
Let $\bar Y$ be the average of $n$ IID random variables $Y_i$
that have mean $\mu$, variance $\sigma^2>0$ and a finite third moment.
Then by the Berry-Esseen theorem, there exists a constant $C<\infty$
such that
$$
\sup_{-\infty <z<\infty}\,
\Bigl|\,\Pr\Bigl( \frac{\bar Y-\mu}{\sigma/\sqrt{n}} \leqslant z\Bigr)
-\Phi(z)\Bigr| \leqslant \frac{C\rho}{\sigma^3\sqrt{n}}
$$
for $\rho = \mathbb{E}(|Y-\mu|^3)$ and all $n\geqslant1$.
We can take $C=0.4748$ \cite{shev:2011}.
From this we see that the CLT takes hold at the $O(n^{-1/2})$
rate and that a scaled third central moment $\rho/\sigma^3$ governs the error.
Usable confidence intervals, like the standard one, also have
to contend with a generally unknown $\sigma$.

The error in confidence intervals like the standard one and
also some bootstrap confidence intervals
is commonly studied through scaled third and fourth moments
$$
\gamma = \frac{\mathbb{E}( (Y-\mu)^3)}{\sigma^3}
\quad\text{and}\quad
\kappa = \frac{\mathbb{E}( (Y-\mu)^4)}{\sigma^4} -3.
$$
These are known as the skewness and kurtosis of $Y$.
A Gaussian random variable has $\gamma=\kappa=0$.
If $\kappa<\infty$ and $Y_i$ are IID random variables with
the same distribution as $Y$ then $\hat\mu = \bar Y=(1/n)\sum_{i=1}^nY_i$
has skewness $\gamma/\sqrt{n}$ and kurtosis $\kappa/n$ \cite{beyondanova}.
Then the skewness of $\bar Y$ approaches $0$ more slowly than
the kurtosis does.  A rapidly vanishing kurtosis is consistent with the third moment
appearing in the Berry-Esseen bound but the fourth moment not appearing there.

The coverage error in the standard confidence interval for $\mu$ is
\begin{align}\label{eq:stdcoverr}
\Pr\Bigl(
\bar Y - \frac{s}{\sqrt{n}}t^{1-\alpha/2}_{(n-1)}
\leqslant
\mu \leqslant \bar Y + \frac{s}{\sqrt{n}}
t^{1-\alpha/2}_{(n-1)}
\Bigr) -(1-\alpha).
\end{align}
This error has been well studied by researchers in the 1980s,
especially Peter Hall \cite{hall:1986,hall:1988}, in
the context of bootstrap confidence intervals described below.
A very interesting finding is that for IID sampling
of $n$ observations, the coverage error in the standard interval is commonly
$O(1/n)$ which is better than the RMSE of $O(n^{-1/2})$ for $\hat\mu$.
The UQ achieves a better convergence rate than the estimate whose uncertainty it quantifies.
The one-sided coverage errors 
$\Pr\bigl(\mu\leqslant
\bar Y - {s}{t^{1-\alpha/2}_{(n-1)}/\sqrt{n}}\bigr) -\alpha/2$
and
$\Pr\bigl(\mu \geqslant \bar Y + {s}
t^{1-\alpha/2}_{(n-1)}/{\sqrt{n}}
\bigr) -\alpha/2$ both converge at the $O(n^{-1/2})$ rate
but a fortunate cancellation yields an $O(n^{-1})$ rate for~\eqref{eq:stdcoverr}.

For small values of $n$, some bootstrap confidence intervals
can work better than the standard interval.  The bootstrap is described  in
\cite{efrontibs}, \cite{davi:hink:1997} and \cite{hall:1992}
ranging from introductory to very technical.
A bootstrap sample $Y_1^*,\dots,Y_n^*$ is formed
by taking $Y_i^* = Y_{j(i)}$ for $i=1,\dots,n$ where
$j(i)\stackrel{\mathrm{iid}}\sim
 \bbU\{1,2,\dots,n\}$,
so $Y_i^*$ are sampled with replacement from $Y_1,\dots,Y_n$.
We can then compute $\bar Y^* = (1/n)\sum_{i=1}^n Y_i^*$.
That process can be repeated independently $B$ times ($B=1000$ is commonly
used) yielding $\bar Y^{*1},\dots,\bar Y^{*B}$.
We can sort those values getting
$\bar Y^{*(1)}\leqslant\bar Y^{*(2)}\leqslant\cdots\leqslant\bar Y^{*(B)}$.
The percentile confidence interval at the 95\% level is
$[\bar Y^{*(.025B)},\bar Y^{*(.975B)}]$. Nowhere does it use a
parametric distributional assumption, such as the Gaussian
distribution, for $Y_i$. It does require some moment assumptions
in order to be asymptotically correct.

The bootstrap $t$ confidence interval, also called the percentile $t$
confidence interval has some advantages over the ordinary
percentile interval.  The error is typically $O(1/n)$ for both
one-sided and two-sided intervals.  For a distribution with
$\gamma=\kappa=0$, the two-sided coverage error is $O(1/n^2)$,
so we expect it to work well for Gaussian or nearly Gaussian data.
In some simulations \cite{owen:smallci} 
varying $\gamma$, $\kappa$ and  $3\leqslant n\leqslant 20$,
the bootstrap $t$ 95\% confidence intervals had good coverage:
\begin{quotation}
``The bootstrap $t$ method is shown to be very effective for the construction of central
95\% confidence intervals for the mean of a small sample. Though it fails utterly when
$n = 3$, it achieves close to the nominal coverage for a diverse collection of continuous
sampling distributions provided $n\geqslant4$. 
The intervals can be quite long unless $n\geqslant6$, and
have a highly variable length unless $n\geqslant7$.''
\end{quotation}
Of the 7 continuous distributions in \cite{owen:smallci}, the bootstrap $t$
got close to nominal coverage in 6 of them. The exception was the
lognormal distribution, where the bootstrap $t$ got only about 90\%
coverage by $n=20$. Even that was noticeably higher than all the other
8 methods apart from an alternative formulation of the bootstrap $t$.


The bootstrap $t$ method works as follows.
From each bootstrap sample, we compute
$t^*=\sqrt{n}(\bar Y^*-\bar Y)/s^*$ 
where $(s^*)^2=\sum_{i=1}^n(Y_i^*-\bar Y^*)^2/(n-1)$.
We do this $B$ times and
then sort the resulting values getting $t^{*(1)}\leqslant
t^{*(2)}\leqslant \cdots \leqslant t^{*(B)}$.
The interval uses the approximation
$$
\Pr\Bigl( 
 t^{*(0.025B)}\leqslant 
\sqrt{n}\frac{\bar Y-\mu}s \leqslant t^{*(0.975B)}\Bigr) \approx 0.95
$$
which leads to an approximate 95\% confidence interval for $\mu$ of the form
$$\Bigl[\bar Y- t^{*(0.975B)}\frac{s}{\sqrt{n}},\,
\bar Y- t^{*(0.025B)}\frac{s}{\sqrt{n}}\Bigr].$$

Hall \cite{hall:1988} gives some asymptotic expansions for the
coverage error in confidence intervals for the mean.
His Table 1 includes the percentile bootstrap, the bootstrap $t$
and the normal theory interval~\eqref{eq:cltci} that uses Gaussian
quantiles instead of $t$ quantiles.  
Surprisingly, he does not include the standard Student's $t$ intervals;
for those see~\cite{hallformula-tr}.
The coverage errors for two sided confidence intervals are
\begin{align*}
\text{Normal theory:} &\qquad(2/n) \varphi(z^{1-\alpha/2})\bigl[\phantom{-}0.14\kappa-2.12\gamma^2-3.35\bigr] +O(1/{n^2}),\\
\text{Student's $t$:} &\qquad(2/n) \varphi(z^{1-\alpha/2})\bigl[\phantom{-}0.14\kappa-2.12\gamma^2\,\,\,\phantom{-3.42}\bigr] +O(1/{n^2}),\\
\text{Percentile:} &\qquad(2/n)\varphi(z^{1-\alpha/2})\bigl[-0.72\kappa -0.37\gamma^2-3.35\bigr] +O(1/{n^2}),\quad\text{and}\\
\text{Bootstrap $t$:} &\qquad(2/n)\varphi(z^{1-\alpha/2})\bigl[-2.84\kappa+4.25\gamma^2\,\,\,\phantom{-3.42}\bigr] + O(1/{n^2}),
\end{align*}
where $\varphi$ is the probability density function of the standard Gaussian
distribution. 
Hall's names for the normal theory, percentile and bootstrap $t$
methods are, `Norm', `BACK' and `STUD'
respectively. 

Hall's assumptions include $8$ finite moments and a distribution
for $Y_i$ whose support is not contained within an arithmetic sequence.
His table is for nominal coverage $1-2\alpha$ whereas we ordinarily
target coverage $1-\alpha$, but this only affects the scaling of the
lead term, and not the qualitative consequences of skewness and kurtosis.
Hall's formulas for $n=18$ were very close to the average coverage
for $16\leqslant n\leqslant 20$ in \cite[Table 6]{owen:smallci}, except for the
lognormal distribution.

The coverage error 
formulas show an advantage for the bootstrap $t$.  The term $4.25\gamma^2$
is positive, so it works to increase coverage.  It also has no intercept, while the
normal theory and percentile methods have a negative intercept
which works to decrease their coverage.
Simulations of coverage levels usually show that nonparametric
confidence intervals have a coverage level
that approaches the desired one from below as $n\to\infty$.
That is, undercoverage is more common than overcoverage.
Student's $t$ intervals have a positive coefficient for kurtosis,
a negative coefficient for squared skewness and no intercept.

The distribution of the $t$ statistic under non-normality is much
studied.  For IID data from a distribution symmetric about $\mu$ with heavier
tails than the Gaussian, there is a tendency for confidence intervals
based on the $t$ statistic to be conservative, though precise statements
of this phenomenon require some extra steps.
An explanation and survey of this work, going back nearly 100 years, appears in \cite{cres:1980}.  At a high level, higher kurtosis in $Y_i$
brings a lower kurtosis in $t$, so that extreme thresholds
are exceeded by $|t|$ less often.
What happens is that very large values of $|Y_i-\mu|$ inflate the denominator
$s/\sqrt{n}$ more than the numerator $\bar Y-\mu$.
In the extreme, if we send $Y_1\to\pm\infty$ then $t\to \pm1$.  The consequence
is that for distributions with large kurtosis, the usual 95\% confidence
intervals, based on $|t|\leqslant t_{(n-1)}^{0.975}$ can cover the mean more often than the nominal level, because $t_{(n-1)}^{0.975}\geqslant 1.96$.

To switch from folklore to precise statements 
requires some additional care and caveats. Here are a few of those results.
One subtlety is that the tendency to overcoverage holds at customary confidence levels, 
but not necessarily at lower levels, such as those below 50\%. 
The distribution of $t$ does not depend on $\mu$
so it is studied with $\mu=0$.
Then the findings are mostly studied through 
$\tilde t = \tilde t_n= \sum_{i=1}^nY_i/(\sum_{i=1}^nY_i^2)^{1/2}$,
with $t=\tilde t(n-1)^{1/2}/(n-\tilde t^2)^{1/2}$, a monotone transformation.
For even $\nu\geqslant 2$, $\bbE( \tilde t^\nu)\leqslant \bbE( Z^\nu)$
for $Z\sim\mathcal{N}(0,1)$; see Corollary 1 of  \cite{efro:1969}.
Corollary 2 shows that $\tilde t$ has negative kurtosis under conditions
that include IID symmetrically distributed $Y_i$.
If the random variables are distributed as a scale mixture of mean
zero Gaussians (i.e., $\mathcal{N}(0,\sigma^2)$ for random $\sigma$)
then the $t$ test is conservative at thresholds above $1.8$ \cite{benj:1983}.
More precisely: the bound was established for $2\leqslant n\leqslant 18$, 
with a critical threshold that never went above $1.8$ and decreased 
with increasing $n>6$. Most of the results are for symmetrically distributed
$Y_i$, but it is known that the skewness of $t$ 
is $-2\gamma/\sqrt{n} + O(n^{-3/2})$
where $\gamma$ is the skewness of $Y_i$ \cite[Equation (1.15)]{beyondanova}.
For a survey of related results, see \cite{shao:wang:2013}.

Now we switch to the RQMC context. We replace $n$ by 
our number $R$ of replicates
and $Y_i$ by $\hat\mu_n^{(r)}$.  Only a little is known about
the skewness and kurtosis of $\hat\mu_n$ for various
RQMC methods and much of that knowledge is recent.
Let these be $\gamma_n$ and $\kappa_n$, respectively.

First, there is an old result \cite{loh:2003} 
proving a CLT as $n\to\infty$ for the scrambled $(t,m,d)$-nets 
using the scrambling from \cite{rtms}
when the underlying digital net has $t=0$, under smoothness
conditions on $f$.  That setting then has $\gamma_n\to0$
and $\kappa_n\to0$ and we expect the standard intervals
should then have low coverage error.
The nets of Faure \cite{faures} have $t=0$
but the more widely used ones based on sequences
from Sobol' \cite{sobo:1967} only have $t=0$
for $d\leqslant 2$ \cite[Table 4.1]{nied92}.
On the other hand, randomly shifted lattice rules are known to
produce $\hat\mu$ that does not follow a CLT
\cite{lecu:mung:tuff:2010}. There the distribution of $\hat\mu$
has a density represented by a spline curve.

In the present context with $R$ replicates, we expect the coverage
error to be $O(1/R)$ as $R\to\infty$ for fixed $n$.
Those results don't tell us about what happens when $n\to\infty$
for fixed $R$.

The RQMC counterpart to our sample variance $s^2$
of~\eqref{eq:samplevar} is
\begin{align}\label{eq:rqmcsamplevar}
\tilde s^2 =\tilde s_n^2 
= \frac1{R-1}\sum_{r=1}^R(\hat\mu^{(r)}-\hat\mu)^2
\end{align}
and then the standard interval becomes
$\hat\mu \pm \tilde s t^{1-\alpha/2}_{(R-1)}.$
This interval would be exact if $\hat\mu^{(r)}$ had a Gaussian
distribution, and we can therefore expect it to work well
when a CLT applies to $\hat\mu_n^{(r)}$ as $n\to\infty$.  

The simulations in \cite{ci4rqmc} investigated 
three different confidence interval methods for RQMC:
the standard interval, the percentile bootstrap and
the bootstrap $t$.  The number of replicates was
$R\in\{5,10,20,30\}$ in keeping with the desire
to keep $R$ small.
There were 5 different RQMC methods.
Two of them were lattice rules from LatNet Builder \cite{lecu:mung:2026}
using random shifts modulo one, with and without
the baker transformation of \cite{hick:2002}.
The other three RQMC methods used Sobol's digital nets \cite{sobo:1967} 
using the direction numbers from \cite{joe:kuo:2008}.
They were randomized with either a digital shift (see \cite{lecu:lemi:2002}),
a matrix scramble of \cite{mato:1998:2} 
plus digital shift, or the nested uniform scramble from \cite{rtms}.
The sample sizes were $n=2^m$ for $m\in\{6,8,10,12,14\}$.
There were $6$ integrands with known $\mu$ chosen to mix cases that are
easy and difficult for RQMC, varying in their levels of smoothness
and varying in the extent to which they have low effective
dimension.    See~\cite{ci4rqmc} for a discussion.  An integrand that is easy to
integrate well is not necessarily one that makes the confidence
interval problem easy.   All of those integrands
were constructed to allow varying dimension $d$.
The simulation used $d\in\{4,8,16,32\}$.

Each confidence interval method was challenged with $2400$
use cases from $6$ integrands, $4$ dimensions, $5$ sample sizes,
$5$ RQMC methods and $4$ values of $R$.  Each challenge
was repeated $1000$ times by taking $R$ sample
values without replacement from a pool of $10{,}000$. The goal was to get
95\% coverage.  A method was deemed to fail if it would only
attain 94\% coverage.  If it covers $\mu$ less than
927 times out of 1000, that would happen with less than 0.04
probability (by the binomial distribution) for a method that
had coverage 0.94 or more. As a result any such setting
was deemed to be a confirmed failure of the RQMC confidence interval.

Of the 2400 cases, the percentile bootstrap was found to
fail 1689 times.  This is not surprising given the
results in \cite{owen:smallci}.
The bootstrap $t$ was found to fail 81
times. The standard interval failed only 3 times.
None of those failures were for $R=10$.
Figure 2 of \cite{ci4rqmc} erroneosly
shows one such failure. That happened because
the plotting command to set up the figure's axes
mistakenly did not use  type =``n" in the
call to the {\tt plot} function in the R language.

Based on the results in \cite{ci4rqmc}, the current
best recommendation for 95\% RQMC confidence intervals
is to use $R\geqslant 10$ independent replications
along with the standard Student's $t$ based confidence interval,
along with one's preferred RQMC method.
Next we relate this finding to the understanding of confidence
intervals from Hall's formulas.

From the data in \cite{ci4rqmc} it was possible to
compute sample values of the skewness and kurtosis in each
distribution of $\hat\mu$ based on $10{,}000$ evaluations
for each integrand, dimension $d$, sample size $n$ and RQMC method.
Inspection of those estimated values revealed that
most of the RQMC distributions had modest sample skewnesses
and many of them had very large sample kurtoses.  A
large kurtosis presents a difficulty for the bootstrap $t$
and an advantage for the standard interval as discussed
above.  The modest skewness removes a disadvantage
for the standard interval and it removes an advantage for the bootstrap
$t$. Figure 2 of \cite{ci4rqmc} shows that in some instances
with extremely large kurtosis, the standard confidence
interval had coverage well above the nominal 95\%
level.  This consequence of high kurtosis is
well known \cite{cres:1980} as mentioned above.

The empirical findings motivated 
subsequent work in \cite{lowskewness} which shows that for random linear
scrambling of a digital net in base $2$ with a random digital shift,  the
skewness of $\hat\mu_n$ is $\gamma_n=O(n^\epsilon)$ for any $\epsilon>0$,
so it is almost $O(1)$.  Furthermore, under a model with randomly
chosen generator matrices the skewness is $O(n^{-1/2+\epsilon})$.
The kurtosis of $\hat\mu$ from random linear scrambling
of a digital net in base $2$ is known to diverge to $\infty$
as $n\to\infty$ for an analytic function on $[0,1]$
by a finding in \cite[Section 3]{superpolyone}.
The reasoning is as follows.  
There is an event of probability $\Omega(1/n)$ that gives a
squared error of $\Omega(1/n^2)$ (for smooth $f'$ that does
not integrate to zero).
That same event on its own contributes $\Omega(n^{-5})$
to the fourth power of the error.  The expected squared
error is $O(n^{-3+\epsilon})$ and so
$\kappa_n +3=\Omega(n^{-5})/O(n^{-6+2\epsilon})=\Omega(n^{1-2\epsilon})$.
An integrand on $[0,1]^d$ for $d>1$ with a smooth
contribution from a one dimensional main effect will have
the same diverging kurtosis and the critical event has
probability $\Omega(d/n^2)$. Having $\kappa_n\to\infty$ completely
rules out a CLT for $\hat\mu_n$ from matrix scrambling with a digital shift.

The empirical findings of modest skewness and potentially
very large kurtosis have not been established for the
other RQMC methods.  The other methods in \cite{ci4rqmc}
did not appear to have large skewness. Only three of 600 cases
had $|\hat\gamma|>4$. Those had large kurtoses and
since the variance of $\hat\gamma$ involves sixth moments,
they might just be sampling fluctuations.
Many of the
histograms of $\hat\mu$ had non-Gaussian but nearly symmetric
distributions.  

The bias-corrected accelerated bootstrap (BCa) of \cite{efro:1987}
was not included in the simulations of either \cite{ci4rqmc} or \cite{owen:smallci}.
Hall \cite{hall:1988} prefers the bootstrap $t$, while
DiCiccio and Efron \cite{dici:efro:1996}  
prefer the BCa.  Table 1 of \cite{hallformula-tr}
includes an entry `ABC'  for a method that is very close to BCa.  
The BCa has a coverage expression of
$-2.68\kappa + 3.17\gamma^2-3.42$ (after corrections).
The constant $-3.42$ and the $-2.68\kappa$ term both
suggest that BCa is not
well suited to the RQMC estimates that can have very large kurtosis
and minimal skewness.

\section{The Warnock-Halton quasi-standard error}\label{sec:whqse}

Tony Warnock and John Halton reasoned that QMC ideas should
also be usable to get not just an estimate of $\mu$ but also
an estimate of the uncertainty in an estimate of $\mu$.
Instead of using $R$ completely random estimates (by Monte Carlo)
they use QMC ideas to balance those $R$ replicates with the
goal of getting `better than random' replication.
Their proposal is in the 
technical reports \cite{warn:2001,warn:2002}
and article \cite{halt:2005}.

To get $R\geqslant2$ quasi-replicates of an integral
on $[0,1]^d$, Warnock \cite{warn:2001} takes QMC points $\bsx_i\in[0,1]^{dR}$.
Then for $r=1,\dots,R$ let
$$\tilde \bsx_{i,r} = 
( x_{i,d(r-1)+1}, x_{i,d(r-1)+2},\cdots, x_{i,dr})\in[0,1]^d$$
so $\bsx_i = (\tilde\bsx_{i,1},\tilde\bsx_{i,2},\cdots,\tilde\bsx_{i,R})$.
Now let
$$\hat\mu^{(r)} = \frac1n\sum_{i=1}^n
f(\tilde\bsx_{i,r})
$$
for $r=1,\dots, R$.  Then $\hat\mu = (1/R)\sum_{r=1}^R\hat\mu^{(r)}$
and its  quasi-standard error is
$$
\text{QSE} = \biggl(\frac1R\frac1{R-1}\sum_{r=1}^R(\hat\mu^{(r)}-\hat \mu)^2\biggr)^{1/2}.
$$
Halton \cite{halt:2005} reasons that when $\bsx_1,\dots,\bsx_n$
have very low discrepancy then
$\tilde \bsx_{i,r}$ is effectively independent of
$\tilde \bsx_{i,r'}$ for $1\leqslant r<r'\leqslant R$.
That holds for random $i\sim\bbU\{1,\dots,n\}$ with
any two values $r$ and $r'$ held fixed because $\bsx_1,\dots,\bsx_n$
have nearly the $\bbU[0,1]^{dR}$ distribution.
Then $f(\bsx_{i,r})$ and $f(\bsx_{i,r'})$ are effectively
independent of each other too.
This does not however make 
$\hat\mu^{(r)}$ and $\hat\mu^{(r')}$ effectively independent.
For that, it would suffice to have
$(\tilde\bsx_{1,r},\dots,\tilde\bsx_{n,r})$
effectively independent of 
$(\tilde\bsx_{1,r'},\dots,\tilde\bsx_{n,r'})$, but
small $D_n^*$ does not imply this.

The QSE can be far too small~\cite{warnockhaltonqse}.
For instance if $\bsx_i$ are points of a Sobol' sequence
and $f$ is additive, then we will get a QSE of zero. That happens 
because $\{x_{1j},x_{2j},\dots,x_{nj}\} = \{0,1/n,\dots,(n-1)/n\}$
for all $j=1,\dots,dR$ and then $\hat\mu^{(r)}=\hat\mu^{(1)}$
for $r=2,\dots,R$. Some good results forming confidence intervals
based on the QSE are reported in \cite{warn:2002}. That source
may be hard to find; the results are described in \cite{warnockhaltonqse}.

Ideas like the QSE have potential to bring QMC accuracy 
with respect to $R\to\infty$ for uncertainty quantification 
based on $R$ replicates instead of
the customary MC rate in $R$.  The near independence that
Halton writes about could perhaps be achieved using
a QMC point set $\tilde\bsX\in\bbR^{R\times nd}$.
We can rearrange row $r$ of $\tilde\bsX$ into an $n\times d$ matrix
to use for the $r$'th QMC replicate.
Finding a good QMC point set with $R$ rows and $nd$ columns
presents a substantial challenge for the values of $n$, $R$ and $d$
commonly used in RQMC.
There may however be some alternative way to get QMC
accuracy with respect to $R$ using something more
complicated than their procedure that is also less cumbersome
than finding $R$ QMC points in dimension $nd$.
Their proposal has not been explored or modified very much.

\section{Guaranteed automatic integration library}\label{sec:gail}

Most of the uncertainty quantification methods in this article are
about finding some value $\epsilon$ where, after sampling $f$
$n$ times, we have reason to believe that $|\hat\mu_n-\mu|\leqslant\epsilon$.
The evidence behind that belief might be a certificate, a confidence interval
or an asymptotic confidence interval.
A complementary approach is to start with a target value of $\epsilon$
and look for a value of $n$ for which we will have 
reason to believe that $|\hat\mu_n-\mu|\leqslant\epsilon$.
There are many results in the literature showing how $n$
must grow asymptotically as $\epsilon$ is reduced.
The Guaranteed Automatic Integration Library (GAIL) \cite{tong:etal:2022}
may be the unique one that provides a non-asymptotic solution
for integration over $[0,1]^d$.  The GAIL project continues
as part of the {\text{qmcpy}} Python library
\cite{choi:etal:2024}.

In GAIL, the user specifies $\epsilon$ and then the library is
designed to return an interval $[a,b]$ of width no more than
$2\epsilon$ with
$a\leqslant \mu\leqslant b$. 
In other words, GAIL provides a bracketing solution where
the user can specify the width of the bracket instead of specifying $n$.
A probabilistic version delivers an interval
with $\Pr( a\leqslant \mu\leqslant b)\geqslant 1-\alpha$.

A first approach to GAIL using MC is in \cite{gailmc}.
As noted in Section~\ref{sec:uq} such an approach requires some
extra knowledge about $f(\bsx)$. The approach in \cite{gailmc} is to use
a preliminary sample to get a probabilistic upper bound for $\sigma$.
If $\Pr( \sigma \leqslant \hat\sigma) \geqslant  1-\alpha_1$ for $\alpha_1<\alpha$
then we only need to find $a<b$ with
$\Pr( a \leqslant \bbE( f(\bsx))\leqslant b)\geqslant 1-\alpha_2$
that holds whenever $\var( f(\bsx))\leqslant\hat\sigma^2$ and
$\alpha_1+\alpha_2\leqslant \alpha$.

Now we need an upper confidence limit for $\sigma$.
This seems like it should be even harder to get than
the confidence interval for $\mu$ that we set out to get
and it also requires extra information.
The extra information used in~\cite{gailmc} is the
assumption that
\begin{align}\label{eq:kurtosisassumption}
\bbE( (f(\bsx)-\mu)^4 ) \leqslant \tilde\kappa\sigma^4
\end{align}
for known $\tilde\kappa<\infty$.
That assumption allows for a first Monte Carlo 
sample with $n_1\geqslant 2$ function evaluations
to give an upper confidence bound $\hat\sigma^2$
for $\sigma^2$. That is
followed by a second independent Monte Carlo sample
to provides a confidence interval for $\mu$. The second sample
size $n_2$  can now be chosen using 
$\hat\sigma^2$ to get the desired
interval length.  The first stage uses a probability inequality of
Cantelli and the second uses a Berry-Esseen inequality
where the third absolute moment $\rho$ is bounded
using $\tilde\kappa$.

The assumption~\eqref{eq:kurtosisassumption} is discussed
in \cite{gailmc}. It corresponds to a cone of integrands
\begin{align}\label{eq:cone}
\bigl\{ f\in L_4[0,1]^d \mid \Vert f-\mu(f)\Vert_4 
\leqslant \tilde\kappa^{1/4}\Vert f-\mu(f)\Vert_2\bigr\}.
\end{align}
Scaling $f$ by a constant factor keeps it within the cone.  That is quite
different from using a ball of functions instead, because
scaling a function can move it in or out of a ball.
The cone~\eqref{eq:cone} is non-convex, so it avoids condition (iii)
in the impossibility result of Bahadur and Savage. 
The cost of the algorithm
is not very sensitive to overestimation of $\tilde\kappa$.
If $f(\bsx)$ has a Gaussian distribution with $\sigma>0$
then $\tilde\kappa=3$.  No distribution has
$\tilde\kappa<1$.

A QMC version of GAIL  is in \cite{hick:ruga:2016} .
It is based on the Sobol' sequence and a Walsh
decomposition of the integrand.  See \cite{dick:pill:2010}
for both of those.
The Walsh decomposition expands $f$ into a
sum of Walsh functions indexed by $\bsk\in\{0,1,2,\dots\}^d$
and multiplied by coefficients $\hat f_{\bsk}$.
The integration error can be bounded by a sum of
$|\hat f_{\bsk}|$ over certain indices $\bsk$ that
depend on which Sobol' points are used.
Given a sample from a Sobol' sequence it is possible to estimate
some of the Walsh function coefficients.
They make an assumption about how Walsh coefficients
decay as $\bsk$ moves farther from $\boldsymbol{0}$.
Their assumption goes beyond the decay of Walsh
coefficients noted by \cite{dick:2009} and \cite{yosh:2017}
for smooth $f$.
That assumption places $f$ inside a cone
and it lets them bound the
sum of the remaining absolute coefficients given the 
estimated sum of absolute coefficients.  
They keep doubling the number of Sobol' points
and updating the error bound until the error bound is small enough. 

A counterpart for rank one lattices, using a Fourier decomposition
is in \cite{jime:hick:2016}.  A discussion of relative error is studied in \cite{sorokin2022bounding}.

\section{New directions}\label{sec:new}

We cannot always write an expectation $\mu$ of interest as
an integral of some computable function $f$ over $[0,1]^d$ for finite $d$,
even with all the methods of \cite{devr:1986} at our disposal.
For instance, many problems in Bayesian computation have
$\mu = \bbE( g(\bsx))$ for $\bsx\sim p$ where we cannot readily
sample independently from the posterior distribution $p$.
This $p$ may depend on arbitrary details of a very large data set.
This sampling difficulty is the main impetus for methods like Markov
chain Monte Carlo (MCMC) \cite{broo:gelm:jone:meng:2011}
and  particle filters \cite{chop:papa:2020}.
Because MCMC and particle methods sample dependently it
is more difficult to quantify the uncertainty in their estimates.

\subsection*{Unbiased MCQMC}
A second challenge with UQ for MCMC is that their estimates typically
have a bias.  That bias may disappear exponentially fast with
$n$ while remaining large in practice because $A\rho^n$ for $\rho$
just barely smaller than one may be large for the $n$ we can use.
It is much easier to quantify uncertainty in unbiased estimates that
can be independently replicated.  
Coupling from the past (CFTP) \cite{prop:wils:1996} can generate unbiased
estimates in some MCMC settings, but it only works well in very
restrictive settings.
There is some recent work on more generally applicable
coupling strategies that remove the bias from MCMC.  See
\cite{atch:jaco:2025} for a discussion of those methods.
That allows independent replicates of unbiased MCMC methods
to be used for UQ.

It is possible to embed QMC and RQMC methods into MCMC
by replacing the sequence of IID
random numbers driving the MCMC sampling by a sequence
that is completely uniformly distributed (CUD) \cite{chen:1967,chen:2011,trib:2007,qmcmetro,qmcmetrocts,liu:2024}.
Using a CUD sequence is like using a small pseudo-random number
generator in its entirety.
CUD sequences are described in \cite{levi:1999} and a probabilistic
version called weakly CUD is given in \cite{qmcmetro2}.
These methods are proven to converge to $\mu$ under assumptions
similar to those where usual MCMC converges. Faster convergence is usually
observed especially for the Gibbs sampler. Chen \cite{chen:2011} 
establishes an RMSE of $o(n^{-1/2})$, under strong assumptions.
These methods can be replicated but as for IID MCMC sampling, replication
does not let us eliminate the effects of bias.

By combining unbiased MCMC with a weakly CUD driving sequence,
\cite{du:he:2024}  are able to get independent
unbiased replicates of MCMC estimates for the Gibbs sampler.
That allows simple replication for MCMC with empirically better
convergence than by plain MCMC.  Much of the analysis extends
beyond the Gibbs sampler, but the Gibbs sampler is smooth
which can help it benefit from more evenly distributed inputs
and their algorithm also uses a coupling strategy designed for
the Gibbs sampler.
When CFTP is applicable, it can be used with RQMC \cite{lecu:sanv:2010}.

\subsection*{Normalizing flows}
There has been much recent interest in normalizing
flows \cite{reze:moha:2015}
that produce a transformation $\phi$
of $\bsx\sim\bbU[0,1]^d$ so that $\bsz=\phi(\bsx)\sim q$ where
$q\approx p$.
Usually $\phi$ is a transformation of $d$ Gaussian random variables, but
those can be expressed as a transformation of $d$ uniform ones.
Then we may estimate $\mu$ by a self-normalized importance sampling
estimate
$$
\hat\mu = \frac1n\sum_{i=1}^n g(\bsz_i)\frac{p(\bsz_i)}{q(\bsz_i)}
\Bigm/
\frac1n\sum_{i=1}^n \frac{p(\bsz_i)}{q(\bsz_i)}
$$
where $\bsz_i = \phi(\bsx_i)$ for $\bsx_i\sim\bbU[0,1]^d$.
We need to be able to compute $p$ up to a normalizing constant.
We need to sample from $q$ and also evaluate an unnormalized
version of it, but we are free to choose $q$ from a flexible
parametric family for which these are feasible.
It is also required that $q(\bsz)>0$ whenever $p(\bsz)>0$.

The use of normalizing flows allows MC and RQMC methods
to compute confidence intervals unaffected by the bias in MCMC.
The first effort using RQMC in normalizing flows is in \cite{andr:2024}.
The results there show decreasing effectiveness as the dimension
increases.  It is reasonable to suppose that the ratio $p/q$ becomes
very unfavorable to RQMC as the dimension increases.  
Some more favorable results are in \cite{liu:2024:transport}
which uses transformations
tuned to RQMC along with some dimension reduction ideas.

\subsection*{Median of means}

The distribution of RQMC estimates does not generally follow
a CLT.  When the matrix scramble of \cite{mato:1998:2} 
with a digital shift is applied to a Sobol' net,
the distribution of $\hat\mu_n-\mu$ can be
very non-Gaussian for a smooth integrand.
As noted above, the kurtosis of $\hat\mu_n$ may diverge to infinity
as $n$ increases, while the skewness remains modest or even converges to zero.
In that setting, much of the variance is due to rare events of
probability $\Omega(1/n)$ where $|\hat\mu-\mu|=\Omega( n^{-1})$.
If one takes the median of $R$ replicates instead of the mean,
then the outliers among $\hat\mu^{(1)},\dots,\hat\mu^{(R)}$
are essentially ignored.
This is called a `median of means' because each $\hat\mu^{(r)}$
is the mean of $n$ function evaluations, and we then
take $\hat\mu = \mathrm{median}(\hat\mu^{(1)},\dots,\hat\mu^{(R)})$.

For analytic functions on $[0,1]^d$, a median-of-means
approach brings an error of $O( n^{-c\log_2(n)/d})$ for any $c<3\log(2)/\pi^2\approx 0.21$ \cite{superpolymulti}.
This rate is called super-polynomial because it is better than $O(n^{-r})$ for any
finite $r$.  A median-of-means strategy adapts to a possibly
unknown level of smoothness in $f$. For $R\geqslant \log_2(n)$, 
it attains an RMSE of
$O( n^{-\alpha-1/2+\epsilon})$ for any $\epsilon>0$
when $f$ has finite variation of order $\alpha$ (defined in \cite{dick:2011}).
This is proved in Theorem 2 of \cite{pan:2024:tr}.

There are some other applications of the median of means
method that attain a universal goodness property.
Here are sketches of two of them; the reader should
read them to get the full details.
A median of means approach to RQMC by lattice
rules has been developed by \cite{goda:lecu:2022}.
Lattice rules can be tuned to specific Hilbert space weights.
The median rules in~\cite{goda:lecu:2022} can be constructed
without specifying those weights and yet they
attain nearly the optimal worst-case error rate for any reasonable
choice of weights and smoothness.
Goda and Krieg \cite{goda:krie:2024:tr} choose a
random prime number $p$ from the interval $[\lfloor n/2\rfloor+1,n]$.
Then they choose a rank one lattice in $[0,1]^d$ with $p$
points using a generating vector chosen uniformly from $\{1,2,\dots,p-1\}^d$.
From that rank one lattice, they estimate $\mu$.
They repeat this $R=\Omega( \log(n))$ times  independently and
take $\hat\mu  =\mathrm{median}(\hat\mu^{(1)},\dots,\hat\mu^{(R)})$.
The resulting $\hat\mu$ is nearly optimal in any Korobov class of 
periodic functions with smoothness $\alpha>1/2$. 

The great promise of median-of-means estimates strongly
motivates us to seek confidence intervals or some other
uncertainty quantification for them.
Suppose that $\hat\mu_r$ has a continuous distribution
with a unique median $\tilde\mu$.
Then $\Pr( \hat\mu_r <\tilde\mu)=\Pr( \hat\mu_r >\tilde\mu)=1/2$.
It is straightforward to sort the $\hat\mu^{(r)}$ into
$\hat\mu^{[1]}\leqslant \hat\mu^{[2]} \leqslant \cdots\leqslant \hat\mu^{[R]}$
and from that get a confidence interval on $\tilde \mu$.
For instance
$\Pr( \tilde\mu < \hat\mu^{[r]}) = \sum_{\ell=0}^{r-1}{R\choose \ell}/2^R$
can be used to get an upper confidence limit for $\tilde\mu$
(depending on $r$)
and a lower limit can be attained similarly.
This falls short of providing a confidence interval for $\mu$
because there is no assurance that $\tilde\mu=\mu$.
We would need a computable bound  (probabilistic or otherwise)
for $|\tilde\mu-\mu|$ in order to get a non-asymptotic
uncertainty quantification for $\mu$ this way.

The median of means method is most commonly
used on $n$ IID random variables $Y_i$ with finite variance $\sigma^2$.
The emphasis is very different from our use taking the median of $R$
independent estimates that each use $n$ very dependent values.
While a CLT gives asymptotic confidence intervals for $\mu=\bbE(Y_i)$
the goal in much median of means research is to get 
the desired coverage for finite $n$. That
requires somewhat wider confidence interval than the CLT based ones and 
also is only available for $\alpha\geqslant \alpha_{\min}>0$ 
for a threshold $\alpha_{\min}$ that depends on some assumptions
about the distribution of $Y_i$. 
See \cite{devr:lera:lugo:oliv:2016}  and references therein.
The methods use known $\sigma$ though an upper bound
or upper confidence limit for $\sigma$ (as in GAIL) could be used.

We could split our $R$ replicates into a small number $b$
of subsets of $R/b$ replicates, and take the median of $b$
subset means.  We could then get a finite $R$ confidence
interval for $\mu$, but the ones in \cite{devr:lera:lugo:oliv:2016} 
require knowledge of $\sigma^2_n=\var(\hat\mu_n^{(r)})$ or an 
upper bound for that variance. They also have width proportional to
$\sigma_n$ while the median of means estimates in
\cite{superpolymulti,superpolyone,pan:2024:tr}
have error $o(\sigma_n)$. Therefore a median of means confidence
interval would be quite conservative.
Gobet et al.\ \cite{gobe:lera:meti:2022:tr} compare several
robust confidence interval strategies for RQMC, including median
of means, for known $\sigma_n$, but do not
declare a winning method.

\subsection*{$R$ growing with $n$}

For $N=Rn$ function evaluations, we could take $n=N^c$
and $R=N^{1-c}$ (i.e., integer values near these) for $0<c<1$
as $N\to\infty$ as studied in \cite{naka:tuff:2024}.
To remove an uninteresting complication,
they assume  that $\sigma_n>0$ for all $n$.
They develop CLTs for $\hat\mu =(1/R)\sum_{r=1}^R\hat\mu_{N/R}^{(r)}$.
A CLT along with an estimate $\hat\sigma^2_n$,
such as $\tilde s_n^2$ of \eqref{eq:rqmcsamplevar},
which satisfies
$\lim_{n\to\infty}\Pr\bigl( |\hat\sigma_n^2/\sigma^2_n-1|>\epsilon\bigr)=0$ 
for any $\epsilon>0$ provides an asymptotically valid confidence interval for $\mu$
via~\eqref{eq:cltci} or~\eqref{eq:stdci} (with $R$ playing the role
of the sample size in those equations).

Even though $\hat\mu_{N/R}^{(r)}$ are IID for fixed $N$, that common 
distribution changes as $N\to\infty$ and so the CLT
has to be of the triangular array type. That requires a Lindeberg condition 
(see page 13{:}10 of \cite{naka:tuff:2024})
for which a simpler Lyapunov condition is sufficient. The Lyapunov condition is that
\begin{align}\label{eq:lyapunov}
\frac{ \bbE\bigl( |\hat\mu_{N/R}-\mu|^{2+\delta}\bigr)}{R^{\delta/2}\sigma^{2+\delta}_{N/R}}
=\frac{ \bbE\bigl( |\hat\mu_{N^c}-\mu|^{2+\delta}\bigr)}{N^{(1-c)\delta/2}\sigma^{2+\delta}_{N^c}}
\to0
\end{align}
as $N\to\infty$,  for some $\delta>0$.
A sufficient condition for~\eqref{eq:lyapunov}
is that $\bbE( |\hat\mu_n-\mu|^{2+\delta})/\sigma_n^{2+\delta}<k$
holds for some $k$ and all sufficiently large $n$.
Table 2 of \cite{naka:tuff:2024} gives upper bounds on $c$
to get a CLT
under various assumptions on the regularity of $f$ and the Lyapunov
conditions.  There are also some upper bounds under
the additional requirement that $\hat\sigma_n/\sigma_n$ converges in probability to one.

For matrix scrambling with a digital shift and a smooth
enough integrand, we have $\bbE( |\hat\mu_n-\mu|^{2+\delta})
=\Omega(n^{-3-\delta})$ by the argument in
Section~\ref{sec:rqmc}  and $\sigma^2_n=O(n^{-3+\epsilon})$.
Then
$$
\frac{\Omega(n^{-3-\delta})}{R^{\delta/2}O(n^{(-3+\epsilon)(1+\delta/2)})}
=\Omega\Bigl(\frac{n^{\delta/2-\epsilon(1+\delta/2)}}{R^{\delta/2}}\Bigr)
$$
for any $\epsilon>0$.  As a result,~\eqref{eq:lyapunov}
cannot hold for any $\delta>0$ if $R =o(n)$. 

Other RQMC methods
and other smoothness conditions
can impose less stringent requirements on $R$ for a CLT.
For example the digital nets of
\cite{faures} scrambled as in \cite{rtms} are known to yield 
a CLT for $\hat\mu$ as $n\to\infty$ for fixed
finite $R$ \cite{loh:2003}.
When UQ is the primary criterion with accuracy
secondary, then those $t=0$ nets might
be preferable to the ones of Sobol' with $t>0$
where no CLT has been proven.



\section{Conclusions}\label{sec:conc}

We have seen that uncertainty quantification for QMC estimates
is subject to a complex mix of gaps and tradeoffs and surprises.
The bracketing inequalities for smooth convex integrands
are simple to use and very easy to understand.  However
they require strong assumptions,  exhibit a severe
dimension effect and can greatly overestimate the size of the error.
The other methods with certificates also require
strong assumptions and show a dimension effect.
The methods based on NNLD and NPLD points
use more involved constructions and more complex derivations.

We may be able to get a certificate that becomes narrow
at the customary convergence rate if we know $V_{\mathrm{HK}}(f)$
or a weighted Hilbert space norm for $f$.
However such knowledge is likely to be extremely rare.
An unanchored weighted Hilbert space norm for $f$ also includes
a term  $\bigl(\int_{[0,1]^d} |\partial^{1:d}f(\bsx)|^2\rd \bsx
/\gamma_{1:d}\bigr)^{1/2}$
which can be large enough to make such an error estimate very
conservative.

When we are willing to accept a confidence interval instead of
a certificate, then more methods become available.  These
require strong assumptions such as a known bound on $f$
or a known cone to which $f$ must belong. Confidence intervals
that are still correct for any $f$ in a large class of integrands
can be very conservatively wide.

If we are willing to accept an asymptotic confidence interval,
then more choices become available.
A plain Student's $t$ confidence interval based on a modest
number $R$ of random replicates of an RQMC rule performed
well in simulations. One explanation, which needs more
study, is that $\hat\mu_n$ from at least some RQMC
methods takes on a more symmetric distribution as $n\to\infty$.

The phenomenon of $\kappa_n\to\infty$ poses extreme
difficulty for $\tilde s^2_n$ to converge to $\sigma^2_n$, which
is one of the conditions that \cite{naka:tuff:2024}
use for asymptotic confidence intervals.  That
would require $\kappa_n/R\to0$, but we want small
$R$ for accurate estimation of $\mu$.  When $\kappa_n\to\infty$
due to rare outliers, then we may not
see any of those outliers among $R$ replicates.  We will then get an
estimate $\tilde s^2$ that is far smaller than $\sigma^2_n$.
Ordinarily that would be very unfavorable for confidence
interval coverage.  However if the distribution of $\hat\mu$
is symmetric or close enough to symmetric, then the standard
interval will give reliable and even somewhat conservative
coverage despite the frequent underestimation of $\sigma_n$.

It is a pleasant surprise that some RQMC estimates tend towards
symmetry for large $n$ making it possible to get reliable
confidence intervals for $\mu$
without having a CLT or a good estimate of $\sigma^2_n$.
This clearly needs more study, to see what conditions we need
on the integrands and RQMC methods for this to happen.

\section*{Acknowledgments}

This work was supported by the National Science Foundation
under grant DMS-2152780.
I thank the people behind MCQMC 2024:
Christiane Lemieux, Ben Feng, Nathan Kirk,
Adam Kolkiewicz, Carla Daniels and Greg Preston
for organizing such a delightful meeting.
Most of the new work reported here was done
in collaboration with Pierre L'Ecuyer, Bruno Tuffin,
Marvin Nakayama, Michael Gnewuch, Peter Kritzer
and Zexin Pan.
I received helpful comments on this document
from Peter Kritzer, Marvin Nakayama, Pierre L'Ecuyer
and two anonymous reviewers.

%
%

\bibliographystyle{spmpsci}
\bibliography{qmc}
\end{document}